\theoremstyle{plain}
\theoremstyle{plain}
\newtheorem{thm}{Theorem}
\newtheorem{definition}[thm]{Definition}
\newtheorem{lem}[thm]{Lemma}
\newtheorem{prop}[thm]{Proposition}
\theoremstyle{definition}
\newtheorem{ex}[thm]{Example}
\newtheorem{rem}[thm]{Remark}
\newcommand{\C}{\mathbb{C}}
\newcommand{\Z}{\mathbb{Z}}
\newcommand{\cA}{\mathcal{A}}
\newcommand{\Oh}{\mathcal{O}}
\newcommand{\im}{\operatorname{im}}
\newcommand{\del}{\partial}
\newcommand{\delbar}{\overline{\partial}}
\newcommand{\pr}{\operatorname{pr}}
\newcommand{\Pro}{\mathbb{P}}
\newcommand{\Cdot}{{\raisebox{-0.7ex}[0pt][0pt]{\scalebox{2.0}{$\cdot$}}}}
\mathchardef\mhyphen="2D
\let\oldabstract\abstract
\let\oldendabstract\endabstract
\renewenvironment{abstract}
{%
	{\list{}{\addtolength{\leftmargin}{3em} 
			\listparindent 0em%
			\itemindent    \listparindent%
			\rightmargin   \leftmargin%
			\parsep        \z@ \@plus\p@}%
		\item\relax}%
	{\endlist}%
	\oldabstract}
{\oldendabstract}
\title{The Double Complex of a Blow-up}
\author{Jonas Stelzig\thanks{\textit{email:} jonas.stelzig@wwu.de}\\
	WWU Münster}
\begin{document}

\maketitle 
\begin{abstract}
We compute the double complex of smooth complex-valued differential forms on projective bundles over and blow-ups of compact complex manifolds up to a suitable notion of quasi-isomorphism. This simultaneously yields formulas for ``all'' cohomologies naturally associated with this complex (in particular, de Rham, Dolbeault, Bott-Chern and Aeppli).
\end{abstract}

\section{Introduction}
Given a compact complex manifold $X$ and a complex submanifold $Z\subseteq X$ of codimension $r$ at least $2$, the blow-up $\widetilde{X}$ of $X$ along $Z$ is a new complex manifold, roughly obtained from $X$ by replacing $Z$ with the space of all directions into $Z$, i.e. the projectivized normal bundle. A natural task is to express cohomological invariants of $\widetilde{X}$ in terms of those of $X$ and $Z$. In deliberately vague notation, the expected (additive) relation is the following:
\[
H(\widetilde{X})\cong H(X)\oplus \bigoplus_{i=1}^{r-1}H(Z)[i],
\]
where $H$ means some cohomology and $[i]$ denotes an appropriate degree shift.\\

In this article, we compute the double complex of complex-valued forms $\cA_{\widetilde X}$ for $\widetilde{X}$ a blow-up up to ``$E_1$-isomorphism''. This establishes the above formula for all linear functors (from the category of double complexes to, say, vector spaces) that map $E_1$-isomorphisms to isomorphisms. In particular, one obtains a uniform proof for Dolbeault cohomology, the higher pages of the Fr\"olicher spectral sequence, the de Rham cohomology and the Bott-Chern and Aeppli cohomologies. As intermediate steps that might be of independent interest, we also compute the double complex of the projectivization $\Pro(E)$ of a vector bundle $E$ of rank $n$ on $X$, resulting in a formula for cohomology of the type
\[
H(\Pro(E))\cong\bigoplus_{i=0}^{n-1}H(Z)[i],
\] 
and show that for a modification $\widetilde{X}\longrightarrow X$, the cohomology of $\widetilde{X}$ contains that of $X$ as a direct summand.\\

The key algebraic point of our argument is to stay on the level of (double) complexes and avoid passing to cohomology as long as possible. A main geometric input is a formula from \cite{guillen_critere_2002} for the higher direct image sheaves of the holomorphic differentials on the blow-up.\\

There is a wide range of related work that present article fits into, mostly focussing on particular cohomology theories: The blow-up formula for de Rham cohomology has been established in \cite[p. 605 f.]{griffiths_principles_1978}, using the Thom-isomorphism. In the case when $X$ is K\"ahler, this is refined to include the Hodge structure on the de Rham cohomology, i.e. Dolbeault-cohomology, in \cite[7.3.3]{voisin_hodge_2002}. More recently, there has been a lot of activity on extending these results to cohomologies other than de Rham in the non-K\"ahler case: In \cite{rao_dolbeault_2019}, the existence of an isomorphism for the Dolbeault cohomology of general (compact) complex manifolds is established and the formula for the Bott-Chern cohomology is conjectured. The isomorphism in the Dolbeault case is made explicit in \cite{meng_explicit_2018} and the compactness hypothesis is removed. The method used there is conceptualized and extended in \cite{meng_mayer-vietoris_2018}. The Bott-Chern case is partially proved in \cite{yang_bott-chern_2017} and complemented with a conjecture for the Bott-Chern cohomology of projective bundles (c.f. also \cite{rao_dolbeault_2019}) which implies the full formula in this case. Independently, the Dolbeault case is also established in \cite{angella_note_2017} in the situation that $Z$ admits a neighborhood with a holomorphic retract to $Z$, using a Thom-isomorphism for Dolbeault cohomology. In \cite{meng_morse-novikov_2018}, the case of Morse-Novikov cohomology is considered and in \cite{rao_dolbeault_2018}, an explicit isomorphism is given for Dolbeault cohomology with values in a vector bundle.\\

The present article has mostly been written after the first preprint versions of \cite{rao_dolbeault_2019}, \cite{yang_bott-chern_2017} and \cite{angella_note_2017} had appeared and the author is glad to have been influenced by them. Vice versa, it is hopefully fair to say that the preprint version of the present article has had some effect on parts of the latest versions of several of the more recent works on the topic, see e.g. \cite[p. 4f and Prop. 3.4]{rao_dolbeault_2019}, \cite[Lem. 4.1 and App. B]{rao_dolbeault_2018}, \cite[Lem. 4.1]{meng_explicit_2018}, \cite[App. A.2]{yang_bott-chern_2017}.\\

Given the central position of the blow-up construction in complex geometry and, more specifically, bimeromorphic geometry, results as the ones proven here can be put to use in many ways. We refer the reader to all the articles mentioned above for a plethora of beautiful applications. In the present work, we content ourselves with some example calculations as its further consequences are more naturally derived after a more in-depth discussion of the notion of $E_1$-isomorphism, which is given in \cite{stelzig_structure_2018}.

\section{Preliminaries}\label{sec: Preliminaries}
It will be convenient to abstract the algebraic properties of the double complex of $\C$-valued smooth differential forms on a complex manifold.\\

We will be considering bounded double complexes over the complex numbers with real structure, i.e. quadruples
\[
(A^{\Cdot,\Cdot},\del_1,\del_2,\sigma)
\]
consisting of
\begin{itemize}
	\item a (not necessarily finite dimensional) $\Z^2$-graded $\C$-vector space $A^{\Cdot,\Cdot}$, s.t. $A^{p,q}=0$ for almost all $(p,q)\in\Z^2$,
	\item two $\C$-linear maps $\del_1$ and $\del_2$ of degrees $(1,0)$ and $(0,1)$ which satisfy $\del_1^2=\del_2^2=0$ and $\del_1\del_2+\del_2\del_1=0$.
	\item a conjugation-antilinear involution $\sigma$ on $A^{\Cdot,\Cdot}$, satisfying $\sigma A^{p,q}=A^{q,p}$ and $\sigma\del_1\sigma=\del_2$.
\end{itemize} 

To ease notation and language, in the following, we will say double complex instead of bounded double complex over the complex numbers with real structure and write $A$ instead of $(A^{\Cdot,\Cdot},\del_1,\del_2,\sigma)$. By a map of double complexes, we mean a $\C$-linear map of the underlying vector spaces, compatible with the bigrading, differentials and real structure.

\begin{ex} Let $X$ be a connected compact complex manifold of dimension $n$.
	\begin{itemize}
		\item The \textbf{Dolbeault double complex} $\cA_X=(\cA_X^{\Cdot,\Cdot},\del,\delbar,\sigma)$ of $\C$-valued smooth differential forms on $X$. 
		\item The \textbf{double complex of currents} or dual Dolbeault double complex $\mathcal{D}^{top}\cA_X$, consisting in degree $(p,q)$ of the topological dual of $\cA_X^{n-p,n-q}$ (i.e. ``currents'', see \cite[par. 8.,9.]{serre_theoreme_1955} for more details), with differentials \begin{align*}		
		\del_{\mathcal{D}^{top}\cA_X}^{p,q}:=(\varphi\mapsto (-1)^{p+q+1}\varphi\circ\del^{n-p-1,n-q})\\
		\overline{\del}_{\mathcal{D}^{top}\cA_X}^{p,q}:=(\varphi\mapsto (-1)^{p+q+1}\varphi\circ\overline{\del}^{n-p,n-q-1})
		\end{align*} 
		\item For any double complex $A$ and integer $i\in\Z$, there is the \textbf{shifted double complex} $A[i]$ with same differentials and involution but new bigrading $(A[i])^{p,q}:=A^{p-i,q-i}$.
	\end{itemize}
\end{ex}
Associated with any double complex $A$ are several cohomologies: 
\begin{itemize}
	\item de Rham (or ``total'') cohomology: $H_{dR}^k(A):=H^k(\bigoplus_{p+q=\Cdot}A^{p,q},\del_1+\del_2)$
	\item Dolbeault (or ``column'') cohomology: $H_{\del_2}^{p,q}(A):=H^q(A^{p,\Cdot},\del_2)$
	\item conjugate Dolbeault (or ``row'') cohomology: $H_{\del_1}^{p,q}(A):=H^p(A^{\Cdot,q},\del_1)$
	\item Bott-Chern cohomology:  $H^{p,q}_{BC}(A):=\big(\frac{\ker\del_1\cap\ker\del_2}{\im\del_1\circ\del_2}\big)^{p,q}$
	\item Aeppli cohomology:
	$H^{p,q}_{A}(A):=\big(\frac{\ker\del_1\circ\del_2}{\im\del_1+\im\del_2}\big)^{p,q}$
	\end{itemize}
There is the ``Fr\"olicher spectral sequence'', converging from Dolbeault to de Rham cohomology:
\[
FS : E_1^{p,q}=H^{p,q}_{\del_2}(A)\Longrightarrow H_{dR}^{p+q}(A).
\]
and an analoguous spectral sequence starting from conjugate Dolbeault. Between the other cohomologies, there are maps induced by the identity. The whole situation is summarized in the following diagram (c.f \cite{angella_cohomological_2013}):
	\[\begin{tikzcd}
	&H_{BC}^{p,q}(A)\ar[ld]\ar[d]\ar[rd]&\\
	H_{\del_1}^{p,q}(A)\ar[rd]\arrow[Rightarrow]{r}&(H_{dR}^{p+q}(A),F_1,F_2)\ar[d]&H^{p,q}_{\del_2}(A)\ar[ld]\arrow[Rightarrow]{l}\\
	&H_A^{p,q}(A)
	\end{tikzcd}\]
\begin{definition}
	A morphism of double complexes is called an \textbf{$E_1$-isomorphism}, if it induces an isomorphism in Dolbeault cohomology. 
\end{definition}
\begin{ex}
	For any connected compact complex manifold, the map 
	\begin{align*}
	\Phi:\cA_X&\longrightarrow \mathcal{D}^{top}\cA_X\\
	\omega&\longmapsto \int_X\omega\wedge\_
	\end{align*}
	is an $E_1$-isomorphism by Serre duality (\cite{serre_theoreme_1955})
\end{ex}

It is well-known that an $E_1$-isomorphism automatically induces an isomorphism on all later pages of the Fr\"olicher spectral sequence (and its conjugate) and in de Rham cohomology. Further, one has the following Lemma (see \cite[Thm. 2.7]{angella_cohomologies_2013-1} and \cite[Thm. 1.3]{angella_cohomologies_2017} for special cases and \cite{stelzig_structure_2018} for a proof in the general
setting and further investigation on the notion of $E_1$-isomorphism):

\begin{lem}
	Any $E_1$-isomorphism induces an isomorphism in Bott-Chern and Aeppli cohomology.
\end{lem}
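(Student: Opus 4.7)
The approach I would take relies on the structure theorem for bounded double complexes over $\C$ with real structure, established in \cite{stelzig_structure_2018}: every such complex decomposes as a direct sum of \emph{squares} (four-node pieces with all four differentials isomorphisms) and \emph{zigzags} (staircase-shaped indecomposables). Since every cohomology theory under consideration is additive and squares are acyclic for all of them, each of $h^{p,q}_{\del_1}$, $h^{p,q}_{\del_2}$, $h^{p,q}_{BC}$, $h^{p,q}_A$ is a non-negative integer combination of the multiplicities $m_Z(A)$ of the various zigzag types $Z$, with coefficients depending only on the shape of $Z$.

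First, I would compute, for each zigzag shape $Z$, the contributions to all four cohomologies bidegree by bidegree. The key combinatorial observation is that on each zigzag, the bidegrees supporting nontrivial $H_{BC}$ and $H_A$ classes are determined by the location of its ``convex'' corners and endpoints, so that the multiplicities $m_Z(A)$ are recoverable as explicit linear combinations of Dolbeault and conjugate Dolbeault dimensions of $A$. Note that certain shapes (for instance horizontal arrows whose only nontrivial differential is a $\del_2$) contribute nothing to $H_{\del_2}$ yet are visible in $H_{\del_1}$, and vice versa, so neither Dolbeault alone suffices.

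Second, I would bring in the real structure: since $\sigma\del_1\sigma=\del_2$ and $\sigma A^{p,q}=A^{q,p}$, the involution exchanges zigzag types $Z\leftrightarrow\sigma Z$ and forces $m_Z(A)=m_{\sigma Z}(A)$. Consequently $\dim H^{p,q}_{\del_1}(A)=\dim H^{q,p}_{\del_2}(A)$, so an $E_1$-isomorphism $f\colon A\to B$ automatically preserves conjugate Dolbeault dimensions as well as Dolbeault ones. Combined with the previous step, this shows that $f$ preserves every zigzag multiplicity and hence every Bott-Chern and Aeppli dimension.

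To upgrade dimensional equality to the statement that the induced maps on $H_{BC}$ and $H_A$ are themselves isomorphisms, I would invoke Krull--Schmidt for bounded double complexes: compatible decompositions of $A$ and $B$ into indecomposables can be chosen so that $f$ becomes block-triangular with respect to zigzag type, and the $E_1$-condition together with $\sigma$-equivariance forces the diagonal blocks on each isomorphism class of zigzags to be isomorphisms. The main obstacle I anticipate is precisely this last upgrade — the combinatorial bookkeeping on zigzag contributions is tedious but mechanical, whereas controlling the behaviour of an arbitrary $E_1$-isomorphism on a chosen decomposition requires the full Krull--Schmidt/unique-decomposition strength of the structure theorem.
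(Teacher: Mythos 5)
First, a remark on the reference point: the paper does not prove this lemma itself --- it cites \cite{angella_cohomologies_2013-1}, \cite{angella_cohomologies_2017} for special cases and defers the general proof to \cite{stelzig_structure_2018}. Your strategy (decompose into squares and zigzags, observe that all the cohomologies in play are additive and vanish on squares, recover the non-square zigzag multiplicities from row and column cohomology, and use the real structure to deduce the row data from the column data) is exactly the strategy of that reference, and those first three steps are essentially correct (with the caveat that for unbounded dimensions one should phrase the multiplicity count more carefully than as ``linear combinations of dimensions'').

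The genuine gap is in your final step, and it is precisely the step that carries all the content. Krull--Schmidt gives uniqueness of the decomposition of the \emph{objects} $A$ and $B$ into indecomposables; it does not give a block-triangular normal form for a \emph{morphism} between them. Worse, there is no order on zigzag types with respect to which triangularity could even be hoped for: for the one-point zigzag $D=\C_{(p,q)}$ and the length-two zigzag $L=\bigl(\C_{(p-1,q)}\xrightarrow{\;\del_1\;}\C_{(p,q)}\bigr)$ one has nonzero morphisms in \emph{both} directions ($D\to L$ hitting the target spot, $L\to D$ killing the target spot), and these induce nonzero --- indeed bijective --- maps on $H_{BC}^{p,q}$ and on $H_A^{p-1,q}$ respectively. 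So ``off-diagonal'' components of $f$ can contribute to Bott--Chern and Aeppli classes, and equality of all dimensions does not by itself force the induced map to be injective or surjective. The mechanism that actually closes this gap in \cite{stelzig_structure_2018} is different: one shows that a morphism inducing isomorphisms on row and column cohomology admits a two-sided inverse \emph{modulo morphisms factoring through direct sums of squares} (equivalently, it becomes an isomorphism in the quotient category by the ideal of such morphisms). Since $H_{BC}$ and $H_A$ are additive and annihilate squares, they annihilate every morphism factoring through squares, and hence send such an $f$ to an invertible map. Your sketch needs this ``inverse up to squares'' statement (or an equivalent splitting-off argument by induction on zigzags); without it, the passage from equal multiplicities to an isomorphism of $H_{BC}$ and $H_A$ does not go through.
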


\section{Projective bundles}
The Dolbeault case of the following proposition is proved in \cite{rao_dolbeault_2019}. We show here that one can define all relevant maps on the double complex level.
\begin{prop}\label{complex of a projective bundle}
	Let $\widetilde{\pi}:E\longrightarrow X$ be a complex vector bundle of rank $n$ over a complex manifold $X$ and $\pi:\Pro(E)\longrightarrow X$ the associated projective bundle. Denoting by $K$ the double complex $K:=\bigoplus_{i=0}^{n-1}\cA_X[i]$, there is a commutative diagram
	\[
	\begin{tikzcd}
	&\cA_X\ar[ld]\ar[d]\ar[rd,"\pi^*"]&\\
	\cA_X\otimes\cA_{\Pro^{n-1}}&K\ar[l]\ar[r]&\cA_{\Pro(E)},
	\end{tikzcd}
	\]
	such that the horizontal maps are $E_1$-isomorphisms and the others induce injective maps in Dolbeault-cohomology.
\end{prop}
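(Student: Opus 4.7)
The overall plan is to construct all five maps by hand using a canonical real, $d$-closed representative of the hyperplane class on the fibres, and then reduce the $E_1$-isomorphism statement to the Dolbeault projective bundle formula already established in \cite{rao_dolbeault_2019}. To begin, I would fix a Fubini-Study form $\omega_{FS}\in\cA_{\Pro^{n-1}}^{1,1}$ (a real, $d$-closed $(1,1)$-form whose powers $1,\omega_{FS},\ldots,\omega_{FS}^{n-1}$ represent a basis of $H^{*,*}_{\delbar}(\Pro^{n-1})$) and, via Chern-Weil applied to a Hermitian metric on $\Oh_{\Pro(E)}(1)$, a real, $d$-closed representative $\eta\in\cA_{\Pro(E)}^{1,1}$ of $c_1(\Oh_{\Pro(E)}(1))$. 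The top-down maps are the obvious ones: the inclusion of $\cA_X$ as the $i=0$ summand of $K$, the map $\omega\mapsto\omega\otimes 1$ on the left, and $\pi^*$ on the right. The bottom horizontal map $K\to\cA_X\otimes\cA_{\Pro^{n-1}}$ sends $(\omega_0,\ldots,\omega_{n-1})$ to $\sum_{i}\omega_i\otimes\omega_{FS}^i$, and $K\to\cA_{\Pro(E)}$ sends it to $\sum_{i}\pi^*\omega_i\wedge\eta^i$.

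Second, I would check that these are well-defined maps of double complexes and that the diagram commutes. The shift $[i]$ on the $i$-th summand of $K$ is precisely absorbed by the bidegree $(i,i)$ of $\omega_{FS}^i$, resp.\ $\eta^i$, so bigradings match. Since both forms are $\del$- and $\delbar$-closed and $\sigma$-invariant (being real $(i,i)$-forms), compatibility with $\del_1$, $\del_2$ and the real structure follows from the Leibniz rule. Commutativity of the diagram is trivial, using $\omega_{FS}^0=\eta^0=1$.

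The core step is the $E_1$-isomorphism property of the two horizontal maps. For the left map, this is the Künneth theorem in Dolbeault cohomology, together with the classical description $H^{*,*}_{\delbar}(\Pro^{n-1})\cong\C[\omega_{FS}]/(\omega_{FS}^n)$. For the right map, the Dolbeault projective bundle formula from \cite{rao_dolbeault_2019} asserts that $H^{*,*}_{\delbar}(\Pro(E))$ is free over $H^{*,*}_{\delbar}(X)$ with basis $1,[\eta],\ldots,[\eta^{n-1}]$; by construction the right horizontal map induces exactly this isomorphism on Dolbeault cohomology.

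Finally, injectivity of the three top-down maps in Dolbeault cohomology is automatic: the vertical one is the inclusion of a direct summand, and the two diagonals factor through it followed by an $E_1$-isomorphism. I expect the main obstacle to be the verification that the wedge-with-$\eta^i$ map realizes the projective bundle decomposition at the cohomology level; the cleanest route is to cite \cite{rao_dolbeault_2019}, but alternatively one can prove this directly by a Leray-Hirsch/Mayer-Vietoris argument using a trivializing open cover of $E\to X$ and reducing to the Künneth case on each $U\times\Pro^{n-1}$.
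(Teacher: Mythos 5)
Your construction is essentially the one in the paper: both proofs realize $K$ concretely as $\cA_X$ tensored with the span of powers of a $d$-closed $(1,1)$-form representing the relative hyperplane class, and define the right-hand map by $\omega\otimes(\text{power})\mapsto\pi^*\omega\wedge(\text{power})$; your use of a Chern--Weil representative of $c_1(\Oh_{\Pro(E)}(1))$ versus the paper's curvature $\theta$ of the tautological bundle, and your fixed Fubini--Study form versus the paper's restriction $\theta_x=\theta|_{F_x}$ to a fibre, are cosmetic differences (your insistence on \emph{real} representatives is, if anything, slightly cleaner for compatibility with the real structure $\sigma$). The one genuine divergence is the input for the $E_1$-isomorphism property of the map $K\to\cA_{\Pro(E)}$: the paper deduces it from the Hirsch Lemma for Dolbeault cohomology (Cordero--Fern\'andez--Gray--Ugarte, via Borel's spectral sequence), whereas you quote the Dolbeault projective bundle formula of \cite{rao_dolbeault_2019} (or sketch a Leray--Hirsch/Mayer--Vietoris alternative). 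Since the paper itself states that the Dolbeault case of the proposition is proved in \cite{rao_dolbeault_2019} and presents its contribution as the chain-level lift, citing that result is legitimate and not circular, provided you confirm (as you assert) that the isomorphism there is induced by cup product with the powers of $c_1(\Oh_{\Pro(E)}(1))$, so that it agrees with the map you defined; the paper's route via the Hirsch Lemma buys a more self-contained argument. Your remaining verifications (algebraic K\"unneth for the left-hand map, injectivity of the diagonal maps by factoring through the direct-summand inclusion followed by an $E_1$-isomorphism) match the paper's.
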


\begin{proof}
	Let 
	\[
	T:=\{(e,p)\in E\times\Pro(E)\mid e\in p\}\subseteq \pi^*E
	\]
	denote the tautological bundle on $\Pro(E)$. For any fibre $F_x:=\pi^{-1}(x)\cong\Pro^{n-1}$, there is an identification $T|_{F_x}\cong \Oh_{\Pro^{n-1}}(-1)$. Choose some hermitian metric $g$ on $T$ and let $\theta\in\cA^2_{\Pro(E)}$ be the curvature of the Chern connection defined by $g$, s.t. 
	\[
	c_1(T)=\left[\frac{1}{2\pi i}\theta\right]\in H^2_{\delbar}(\Pro(E)).
	\] 
	It is known that $\theta$ is a closed $(1,1)$-form and because $0\neq c_1(\Oh_{\Pro^{n-1}}(-1))=c_1(T)|_{F_x}$, it is not exact. Denote $\theta_x:=\theta|_{F_x}$ and  by $\cA(\theta)$, resp. $\cA(\theta_x)$ the finite dimensional (as $\C$-vector space) subcomplexes of $\cA_{\Pro(E)}$, resp. $\cA_{F_x}$ with basis $\{1,\theta,\theta^2,...,\theta^{n-1}\}$  resp. $\{1,\theta_x,\theta_x^2,...,\theta_x^{n-1}\}$. With this, we can redefine $K$ as $K:=\cA_X\otimes\cA(\theta)$, which is equivalent to the definition given in the statement. The bigraded Dolbeault cohomology algebra of $\Pro^{n-1}$ is given by $H_{\delbar}^{\Cdot,\Cdot}(\Pro^{n-1})=\C[t]/(t^{n-1})$ with $t=c_1(\Oh_{\Pro^{n-1}}(-1))$ of bidegree $(1,1)$. In particular, restriction and projection to cohomology (all forms in $\cA(\theta_x)$ are closed) yield isomorphisms of double complexes $\cA(\theta)\cong\cA(\theta_x)\cong H(\Pro^{n-1})$ and the inclusion
	\[
	\cA(\theta_x)\longrightarrow\cA_{F_x}
	\]
	is an isomorphism on the first page of the Fr\"olicher spectral sequence. Thus, we can define the left hand map in the diagram in the statement as the composite
	\[
	\cA_X\otimes\cA(\theta)\longrightarrow\cA_X\otimes\cA(\theta_x)\longrightarrow\cA_X\otimes\cA_{\Pro^{n-1}},
	\]
	where the maps are the identity on the first factor and restriction and inclusion on the second factor.\\
	
	The right hand map in the diagram in the statement is given by $\pi^*$ on the first factor and the inclusion on the second. It is an isomorphism on the first page of the Fr\"olicher spectral sequence by the Hirsch Lemma for Dolbeault cohomology, proven in \cite[Lem. 18]{cordero_compact_2000}, as a consequence of a spectral sequence introduced by Borel in the appendix to \cite{hirzebruch_topological_1978}.
	The left diagonal and the vertical map are inclusions to the first factor of the tensor product and commutativity is clear by definition.
\end{proof}

\begin{rem}
	Since the maps are defined on the level of complexes, this result allows in particular the computation not only of the Dolbeault, but also of the Bott-Chern and Aeppli cohomologies of a projective bundle, thereby confirming the formula conjectured in \cite{rao_dolbeault_2019}.\\
\end{rem}
\begin{rem} (Products)
	Since we get that $H_{BC}(\Pro(E))\cong\bigoplus_{i=0}^{n-1}H_{BC}(X)\wedge[\theta^i]$, there is a $\del\delbar$-exact form $\eta\in \cA_{\Pro(E)}^{n,n}$ and $d$-closed forms $c_i\in \cA_X^{i,i}$ s.t. $\theta^n+\pi^*c_1\theta^{n-1}+...+\pi^*c_{n-1}\theta+c_n=\eta$. Denote the left hand side of this equation by $P(\theta)$. We may then redefine $K:=\cA_X[\theta]/(P(\theta))$, which clearly has the same additive structure as before but is also equipped with a multiplication map. We obtain a diagram of the form 
	\[
	\begin{tikzcd}
	K^{\otimes 2}\ar[r]\ar[d]&K\ar[d]\\
	\cA_{\Pro(E)}^{\otimes 2}\ar[r]&\cA_{\Pro(E)}
	\end{tikzcd}
	\]	which does a priori \textbf{not} commute, but where the vertical maps are $E_1$-isomorphisms and one obtains commutativity after applying $H_{BC}$ or $E_r$ for $r\geq 1$, thereby allowing to compute the product in cohomology.
\end{rem}
\section{Modifications}
The next goal is to compute the double complex of blow-ups up to $E_1$-isomorphism. Here is a general computation yielding a partial answer:

\begin{definition}
	For a map $p:X\longrightarrow Y$ between connected compact complex manifolds of complex dimensions $\dim X=n,\dim Y=m$, the \textbf{pushforward} $p_*$ is, with the notation of section \ref{sec: Preliminaries}, the composite
	\[\cA_X\overset{\Phi}{\longrightarrow}\mathcal{D}^{top}\cA_X\overset{\mathcal{D}p^*}{\longrightarrow}\mathcal{D}^{top}\cA_Y[n-m].\]
\end{definition}

\begin{lem}\label{complex of modification}
	For a surjective holomorphic map $f:Y\longrightarrow X$ of connected compact complex manifolds of the same  dimension, the map
	\[
	\cA_Y\overset{(f_*, \pr)}{\longrightarrow}\mathcal{D}^{top}\cA_X\oplus \cA_Y/f^*\cA_X
	\]
	is an $E_1$-isomorphism.
\end{lem}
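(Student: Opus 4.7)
The plan is to realise $(f_*,\pr)$ as the middle vertical in a map of short exact sequences of double complexes and then invoke the five lemma on the associated long exact sequences in Dolbeault cohomology. The surjectivity of $f$ makes $f^*:\cA_X\to\cA_Y$ injective, and since $f^*$ commutes with $\del$, $\delbar$ and $\sigma$, its image $f^*\cA_X\subseteq\cA_Y$ is a subcomplex isomorphic to $\cA_X$. Hence one has a short exact sequence
\[
0\longrightarrow f^*\cA_X\longrightarrow \cA_Y\longrightarrow \cA_Y/f^*\cA_X\longrightarrow 0,
\]
which I would map into the canonical split short exact sequence
\[
0\longrightarrow \mathcal{D}^{top}\cA_X\longrightarrow \mathcal{D}^{top}\cA_X\oplus \cA_Y/f^*\cA_X\longrightarrow \cA_Y/f^*\cA_X\longrightarrow 0
\]
by taking the restriction of $f_*$ on the left, $(f_*,\pr)$ in the middle, and the identity on the right. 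The right square commutes because the projection to the second factor agrees with $\pr$, and the left square commutes because $\pr\circ f^* = 0$.

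The key computation is then that the leftmost vertical is an $E_1$-isomorphism. Since $f$ is a surjective holomorphic map between connected compact complex manifolds of the same dimension, it is a finite branched cover of some degree $d\geq 1$, so for $\alpha,\eta\in\cA_X$,
\[
(f_*(f^*\alpha))(\eta) = \int_Y f^*\alpha\wedge f^*\eta = \int_Y f^*(\alpha\wedge\eta) = d\int_X \alpha\wedge\eta = d\cdot\Phi(\alpha)(\eta).
\]
Thus $f_*|_{f^*\cA_X} = d\cdot \Phi\circ (f^*)^{-1}$, which is an $E_1$-isomorphism because $\Phi$ is one by the example following the definition and $f^*:\cA_X\to f^*\cA_X$ is an isomorphism of double complexes.

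With both outer verticals being $E_1$-isomorphisms, the five lemma applied to the induced ladder in Dolbeault cohomology yields that $(f_*,\pr)$ is an $E_1$-isomorphism as well. The only non-formal input is the standard fact that a surjective holomorphic map between equidimensional compact complex manifolds has a well-defined topological degree computing fibre integrals, together with the Serre-duality $E_1$-isomorphism $\Phi$ quoted in the preliminaries; the rest is bookkeeping with short exact sequences, and I do not anticipate any substantive obstacle.
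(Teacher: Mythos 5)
Your proposal is correct and rests on exactly the same key input as the paper's proof, namely the identity $f_*\circ f^*=\deg(f)\cdot\Phi$ (coming from $\int_Y f^*\omega=\deg(f)\int_X\omega$) together with Serre duality making $\Phi$ an $E_1$-isomorphism. The only difference is packaging: the paper deduces from this that $f^*$ is split injective on the first page, so the long exact sequence of $(\ast)$ breaks into short exact ones, whereas you encode the same information as a map into a split short exact sequence and invoke the five lemma — these are formally equivalent.
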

\begin{proof}
	Since $f$ is a finite covering with $\deg(f)>0$ sheets when restricted to appropriate dense open subsets of $X$ and $Y$, {c.f. \cite[p. 179]{grauert_coherent_1984}}, one obtains an exact sequence
	\[\tag{$\ast$}
	0\longrightarrow\cA_X\overset{f^*}{\longrightarrow}\cA_Y\longrightarrow\cA_Y/f^*\cA_X\longrightarrow 0.
	\]
	As noted in \cite{wells_comparison_1974}, one has $\int_{Y}f^*\omega=\deg (f)\int_X\omega$ for any form $\omega$ on $X$, so that the diagram 
	\[
	\begin{tikzcd}
	\cA_X\ar[r,"f^*"]\ar[d,swap,"\deg(f)\cdot\Phi"]&\cA_{Y}\ar[d,"\Phi"]\\
	\mathcal{D}^{top}\cA_X&\mathcal{D}^{top}\cA_{Y}\ar[l,"\mathcal{D}f^*"]
	\end{tikzcd}
	\]
	commutes.
	Since $\Phi$ induces an isomorphism on the first page of the Fr\"olicher spectral sequence, for every $p\in\Z$, in the long exact sequence of terms on the first page of the Fr\"olicher spectral sequence induced by $(\ast)$
	\[
	...\overset{\delta}{\longrightarrow}H^{p,q}_{\delbar}(\cA_X)\overset{f^*}{\longrightarrow}H^{p,q}_{\delbar}(\cA_Y)\overset{\pr}{\longrightarrow}H^{p,q}_{\delbar}(\cA_Y/f^*\cA_X)\overset{\delta}{\longrightarrow}...
	\]
	the map $f^*$ is a split injection (with left inverse $\frac{1}{\deg(f)}f_*$) and hence $\pr$ is surjective. This implies that the morphism $(f_*,\pr)$ in the statement induces an isomorphism on the first page of the Fr\"olicher spectral sequence.
\end{proof}
Since $\Phi: \cA_X\longrightarrow \mathcal{D}^{top}\cA_X$ is an $E_1$-isomorphism, this implies in particular that the (Dolbeault, Bott-Chern, Aeppli or de Rham) cohomology of $X$ is a direct summand in that of $Y$. Thus, in order to compute the double complex of a blow-up, one just has to take care of the quotient-type summand. We will use this in the next section.

\section{Blow-ups}
\begin{thm}\label{complex of blow-up}
Let $X$ be a connected compact complex manifold, $Z\subsetneq X$ a closed submanifold and $\widetilde{X}$ the blow-up of $X$ at $Z$ and $E\subseteq \widetilde{X}$ the exceptional divisor, so that the following diagram is cartesian:
	\[
	\begin{tikzcd}
	E\ar[r,"j"]\ar[d,"\pi_E"]&\widetilde{X}\ar[d,"\pi"]\\
	Z\ar[r,"i"]&X
	\end{tikzcd}
	\]
The map
\[
\cA_{\widetilde{X}}\overset{(\pi_*,\overline{j^*})}{\longrightarrow}\mathcal{D}^{top}\cA_X\oplus\cA_{E}/\pi_E^*\cA_Z
\]
is an $E_1$-isomorphism, where $\overline{j^*}=\pr\circ j^*$.
\end{thm}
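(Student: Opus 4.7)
The plan is to use Lemma \ref{complex of modification} to peel off the $\mathcal{D}^{top}\cA_X$ summand and then identify the remaining quotient via the projective bundle formula of Proposition \ref{complex of a projective bundle} combined with the Guill\'en--Navarro formula from \cite{guillen_critere_2002} for the higher direct images of holomorphic differentials on the blow-up.

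First, applying Lemma \ref{complex of modification} to $\pi\colon \widetilde{X}\to X$ produces an $E_1$-isomorphism $\cA_{\widetilde{X}}\to \mathcal{D}^{top}\cA_X\oplus \cA_{\widetilde{X}}/\pi^*\cA_X$. Commutativity $\pi\circ j=i\circ \pi_E$ implies $j^*$ sends $\pi^*\cA_X$ into $\pi_E^*\cA_Z$, inducing a quotient map $\overline{j^*}\colon \cA_{\widetilde{X}}/\pi^*\cA_X\to \cA_E/\pi_E^*\cA_Z$. Since $(\pi_*,\overline{j^*})$ factors as the above $E_1$-iso followed by $(\Id,\overline{j^*})$, it suffices to show that $\overline{j^*}$ is an $E_1$-isomorphism. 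For this, consider the two short exact sequences $0\to \pi^*\cA_X\to \cA_{\widetilde{X}}\to \cA_{\widetilde{X}}/\pi^*\cA_X\to 0$ and $0\to \pi_E^*\cA_Z\to \cA_E\to \cA_E/\pi_E^*\cA_Z\to 0$, linked by the vertical $j^*$. Both left inclusions are split injective on Dolbeault cohomology (the first by Lemma \ref{complex of modification}, noting that $\pi$ has degree one; the second by Proposition \ref{complex of a projective bundle}), so the associated long exact sequences break into short exact ones, reducing the claim to showing that $\overline{j^*}^*$ is an isomorphism on the cokernels in Dolbeault cohomology.

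By Proposition \ref{complex of a projective bundle} applied to $\pi_E\colon E=\Pro(N_{Z/X})\to Z$ of rank $r$, the target cokernel is $\bigoplus_{i=1}^{r-1}H_{\delbar}(Z)[i]$, with the $i$-th summand spanned by $\pi_E^*H_{\delbar}(Z)\wedge \theta_E^i$ for $\theta_E$ the curvature class of the tautological line bundle on $E$. For the source cokernel, I would apply the Leray spectral sequence for $\pi$ together with the Guill\'en--Navarro formula, which gives $\pi_*\Omega^p_{\widetilde{X}}\cong \Omega^p_X$ and presents $R^q\pi_*\Omega^p_{\widetilde{X}}$ for $q\geq 1$ as $i_*$ of explicit sheaves on $Z$ built from $\Omega^{\bullet}_Z$ and the normal bundle. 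This yields an abstract decomposition $H^{p,q}_{\delbar}(\widetilde{X})\cong H^{p,q}_{\delbar}(X)\oplus \bigoplus_{i=1}^{r-1}H^{p-i,q-i}_{\delbar}(Z)$, matching the target after quotienting by $\pi^*H_{\delbar}(X)$.

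The main obstacle will be to verify that these two abstract identifications with $\bigoplus_{i=1}^{r-1}H_{\delbar}(Z)[i]$ are actually intertwined by $\overline{j^*}^*$. Concretely, one has to show that the generators of the second summand on the source can be represented by classes on $\widetilde{X}$ whose restriction along $j$ reproduces precisely $\pi_E^*H_{\delbar}(Z)\wedge \theta_E^i$ on $E$. This compatibility should follow by tracing the Guill\'en--Navarro isomorphism and using the identity $c_1(\Oh_{\widetilde{X}}(-E))|_E=\theta_E$ (up to sign), so that cup-product with powers of $c_1(\Oh_{\widetilde{X}}(-E))$ on $\widetilde{X}$ followed by restriction along $j$ realizes the projective bundle decomposition of $H_{\delbar}(E)$ relative to $\pi_E^*H_{\delbar}(Z)$, which finishes the argument.
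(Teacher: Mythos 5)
Your reduction to showing that $\widetilde{j^*}\colon \cA_{\widetilde{X}}/\pi^*\cA_X\to\cA_E/\pi_E^*\cA_Z$ induces an isomorphism in Dolbeault cohomology --- via Lemma \ref{complex of modification}, the two short exact sequences, and the splitting of the long exact sequences coming from injectivity of $\pi^*$ and $\pi_E^*$ --- is exactly the paper's first step and is fine. The gap is in the step you yourself flag as ``the main obstacle'': you propose to compute the two cokernels separately (the target by Proposition \ref{complex of a projective bundle}, the source by Leray plus the Guill\'en--Navarro description of $R^s\pi_*\Omega^p_{\widetilde{X}}$ as explicit sheaves on $Z$) and then to match the two abstract answers by exhibiting classes on $\widetilde{X}$ whose restriction along $j$ gives $\pi_E^*H_{\delbar}(Z)\wedge\theta^i$. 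That matching is never carried out: making it precise requires producing explicit generators (say $j_*(\pi_E^*\alpha\wedge\theta^{i-1})$ or $\pi^*\alpha\wedge c_1(\Oh_{\widetilde{X}}(E))^i$), invoking the self-intersection formula $j^*j_*=(\,\cdot\,)\wedge c_1(\Oh_{\widetilde{X}}(E))|_E$, and checking that the resulting triangular system of coefficients is invertible --- a computation comparable in length to everything else in the argument, which you only assert ``should follow.'' Two abstract isomorphisms of cohomology groups with the same direct sum do not by themselves imply that the specific map $\widetilde{j^*}$ is an isomorphism, so as written the decisive step is missing.

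The paper circumvents this entirely by using the Guill\'en--Navarro result in a sharper, functorial form: not merely that $R^s\pi_*\Omega^p_{\widetilde{X}}$ is some explicit sheaf supported on $Z$, but that for $s\geq 1$ the map \emph{induced by $j^*$ itself} is an isomorphism $R^s\pi_*\Omega^p_{\widetilde{X}}\cong i_*R^s{\pi_E}_*\Omega^p_E$. Feeding this into the morphism of Leray spectral sequences $j^*_L\colon L_{\pi,\Omega^p_{\widetilde{X}}}\to L_{\pi_E,\Omega^p_E}$ gives an isomorphism on all $E_2$-terms with $s\geq 1$; since the bottom rows inject into the abutments (the edge maps are $\pi^*$ and $\pi_E^*$, whose injectivity you have already established), all differentials hitting the bottom rows vanish, and one concludes that $j^*$ is an isomorphism on $\gr_{F_L}^r H^{p,q}_{\delbar}$ for $r<q$, hence that $\widetilde{j^*}$ is an isomorphism --- with no explicit generators, no identification of either side with $\bigoplus_i H_{\delbar}(Z)[i]$, and no compatibility check. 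I would either adopt this functorial comparison or actually carry out the generator computation you sketch; note that the projective bundle proposition is then not needed for the target cokernel beyond the injectivity of $\pi_E^*$.
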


\begin{proof}
	By Lemma \ref{complex of modification}, it remains to show that the induced map 
	\[\widetilde{j^*}: \cA_{\tilde{X}}/ \pi^* \cA_X \longrightarrow \cA_E/ \pi^*_E \cA_Z\] is an $E_1$-isomorphism. This map sits inside a diagram
	\[
	\begin{tikzcd}
	0\ar[r]&\cA_X\ar[r,"\pi^*"]\ar[d,"i^*"]&\cA_{\widetilde{X}}\ar[r]\ar[d,"j^*"]&\cA_{\widetilde{X}}/\pi^*\cA_X\ar[r]\ar[d,"\widetilde{j^*}"]&0\\
	0\ar[r]&\cA_Z\ar[r,"\pi_E^*"]&\cA_{E}\ar[r]&\cA_{E}/\pi_E^*\cA_Z\ar[r]&0,
	\end{tikzcd}
	\]
	which, when applying Dolbeault cohomology, yields for every $p$ a map of two long exact sequences
	\[
	\begin{tikzcd}
	\cdots\ar[r]&H^{p,q}_{\delbar}(\cA_X)\ar[r,"\pi^*"]\ar[d,"i^*"]&H^{p,q}_{\delbar}(\cA_{\widetilde{X}})\ar[r]\ar[d,"j^*"]&H^{p,q}_{\delbar}(\cA_{\widetilde{X}}/\pi^*\cA_X)\ar[r]\ar[d,"\widetilde{j^*}"]&\cdots\\
	\cdots\ar[r]&H^{p,q}_{\delbar}(\cA_Z)\ar[r,"\pi_E^*"]&H^{p,q}_{\delbar}(\cA_{E})\ar[r]&H^{p,q}_{\delbar}(\cA_{E}/\pi_E^*\cA_Z\ar[r])&\cdots.
	\end{tikzcd}
	\]
But $\pi^*$ and $\pi_E^*$ in these sequences are injective: In fact, for $\pi^*$ this was shown in the proof of Lemma \ref{complex of modification} and since $E$ is a projective bundle over $Z$, for $\pi_E$ it follows from Proposition \ref{complex of a projective bundle}. In particular, the long exact sequences decompose into short exact ones and we obtain one diagram for every pair $(p,q)\in\Z^2$:
	\[
	\begin{tikzcd}\tag{$\ast\ast$} 
	0\ar[r]&H^{p,q}_{\delbar}(\cA_X)\ar[r,"\pi^*"]\ar[d,"i^*"]&H^{p,q}_{\delbar}(\cA_{\widetilde{X}})\ar[r]\ar[d,"j^*"]&H^{p,q}_{\delbar}(\cA_{\widetilde{X}}/\pi^*\cA_X)\ar[r]\ar[d,"\widetilde{j^*}"]&0\\
	0\ar[r]&H^{p,q}_{\delbar}(\cA_Z)\ar[r,"\pi_E^*"]&H^{p,q}_{\delbar}(\cA_{E})\ar[r]&H^{p,q}_{\delbar}(\cA_{E}/\pi_E^*\cA_Z\ar[r])&0.
	\end{tikzcd}
	\]

Now, note that $H^{p,q}(\cA_{\widetilde{X}})=H^{q}(\widetilde{X},\Omega_{\widetilde{X}}^p)$ is the cohomology of the sheaf of holomorphic $p$ forms (and similarly for $E$). Let us consider the Leray spectral sequences associated with $\pi$ and $\pi_E$ and the sheaves $\Omega_{\widetilde{X}}^p$ and $\Omega_E^p$:
\[
L_{\pi,\Omega_{\widetilde{X}}^p}:E_2^{r,s}=H^r(X,R^s\pi_*\Omega_{\widetilde{X}}^p)\Longrightarrow (H^{r+s}(\widetilde{X},\Omega_{\widetilde{X}}^p), F^\Cdot_L)
\]
and
\[
L_{\pi_E,\Omega_E^p}: E_2^{r,s}=H^r(Z,R^s{\pi_E}_*\Omega_{E}^p)\Longrightarrow (H^{r+s}(E,\Omega_{E}^p), F_L^\Cdot),
\]
where $F_L^\Cdot$ denotes in both cases the (descending) Leray-filtration on the target. Pullback by $j$ induces a morphism of spectral sequences
\[
j^*_L:L_{\pi,\Omega_{\widetilde{X}}^p}\longrightarrow L_{\pi_E,\Omega_E^p}
\]
which approximates $j^*$ on the target, i.e., on the $E_\infty$-page the $(r,s)$-component coincides with the $r$-th graded part of $j^*:H^{r+s}(\widetilde{X},\Omega_{\widetilde{X}}^p)\longrightarrow H^{r+s}(E,\Omega_E^p)$.\\
The sheaves $R^s\pi_*\Omega_{\widetilde{X}}^p$ and $R^s{\pi_E}_*\Omega_{E}^p$ have been investigated in \cite[Prop. 3.3]{guillen_critere_2002}, using a vanishing result by Bott and Serre's Theorem (B). The computation is done there for smooth schemes, but the result holds in the holomorphic category. In fact, this has already been deduced later in that same article \cite[p. 69]{guillen_critere_2002}, by local considerations. Alternatively, all tools used are also available in the holomorphic category, c.f. \cite[ch. IV]{banica_algebraic_1976}, and the proof can be copied verbatim. The result is that the following maps are isomorphisms for all $p\in\Z$:
\begin{align*}
\pi^*:&~ \Omega_X^p\cong \pi_*\Omega_{\widetilde{X}}^p&(i)\\
\pi_E^*:&~ \Omega_Z^p\cong {\pi_E}_*\Omega_E^p&(ii)\\
j^*:&~ R^s\pi_*\Omega_{\widetilde{X}}^p\cong i_*R^s{\pi_E}_*\Omega_{E}^p\text{ for each }s\geq 1.&(iii)
\end{align*}
The first two isomorphisms imply that the $E_2^{q,0}$-terms can be identified with $H^{p,q}_{\delbar}(\cA_X)$, resp. $H^{p,q}_{\delbar}(\cA_Z)$ and the edge maps with $\pi^*$, resp. $\pi_E^*$. Since these are injective, the diagram $(\ast\ast)$ is canonically isomorphic to
	\[
	\begin{tikzcd}
	0\ar[r]&F^q_LH^{p,q}_{\delbar}(\cA_{\widetilde{X}})\ar[r,"\subseteq"]\ar[d,"F_L^0j^*"]&H^{p,q}_{\delbar}(\cA_{\widetilde{X}})\ar[r]\ar[d,"j^*"]&H^{p,q}_{\delbar}(\cA_{\widetilde{X}})/F^q_LH^{p,q}_{\delbar}(\cA_{\widetilde{X}})\ar[r]\ar[d]&0\\
	0\ar[r]&F_L^qH^{p,q}_{\delbar}(\cA_{E})\ar[r,"\subseteq"]&H^{p,q}_{\delbar}(\cA_{E})\ar[r]&H^{p,q}_{\delbar}(\cA_{E})/F^q_LH^{p,q}_{\delbar}(\cA_{E})\ar[r]&0
	\end{tikzcd}
	\]
Finally, all differentials with target in degree $(r,0)$ for some $r\in\Z$ vanish since the edge maps are injective and the identification $(iii)$ implies that $j^*_L$ is an isomorphism on the $E_2$-page in bidegrees $(r,s)$ for all $r\in\Z$, $s\geq 1$. Therefore, $j^*$ induces isomorphisms 
\[
j^*:\operatorname{gr}_{F_L}^r\! H^{p,q}_{\delbar}(\cA_{\widetilde{X}})\cong \operatorname{gr}_{F_L}^r\!H^{p,q}_{\delbar}(\cA_E)
\]
for all $p,q\in\Z$ and $r<q$. In particular, since a filtered morphism of vector spaces with finite filtrations is an isomorphism if its associated graded is, $\widetilde{j^*}$ is an isomorphism in Dolbeault cohomology.
\end{proof}
\begin{rem}
	(Inverse map and products) The morphism of the theorem fits into a diagram of the form
	\[
	\begin{tikzcd}
	\cA_{\widetilde{X}}\ar[r]\ar[d]&\mathcal{D}^{top}\cA_X\oplus\cA_{E}/\pi_E^*\cA_Z\\
	\mathcal{D}^{top}\cA_{\widetilde{X}}&\cA_X\oplus\bigoplus_{i=0}^{r-2}\cA_Z[i+1]\ar[l]\ar[u]
	\end{tikzcd}
	\]
where the bottom map is given by 
\[(\omega,\eta_0,...,\eta_{r-2})\longmapsto \pi^*\omega+\sum_{i=0}^{r-2}j_*(\Phi\circ\pi_E^*\eta_i\wedge\theta^i)\]
and the right vertical map by $\Phi\oplus\sum_{i=0}^{r-2}(\pi_E^*(\_)\wedge\theta^{i+1})$. As we have seen in the present article, the top map and the vertical ones are $E_1$-isomorphisms. On the other hand, the results of \cite{meng_explicit_2018} and \cite{meng_mayer-vietoris_2018} (c.f. also \cite{angella_note_2017}) imply that the bottom map is an $E_1$-isomorphism. It seems to be likely, but as far as I know unproven at the present stage, that the diagram commutes on $E_1$ (say, after replacing one of the horizontal arrows by its inverse in cohomology). This is further discussed in \cite[section. 6]{meng_mayer-vietoris_2018}. If this was true, it should in particular allow to describe the induced product on $H(X)\oplus\bigoplus_{i=0}^{r-2}H(X)\wedge [\theta^i]$ explicitely (for every $H$ where this is meanigful).

\end{rem}

Let us give a concrete example of how one may compute with the formulas in this article:

\begin{ex} Let $X$ be the Iwasawa manifold, i.e. $X$ is the space of complex upper triangular $3\times 3$ matrices with $1$'s on the diagonal modulo the lattice of such matrices with values in the Gaussian integers. This is a complex non-K\"ahler manifold that has been extensively studied (see e.g. \cite{angella_cohomologies_2013-1}, which also contains further references). The Fr\"olicher spectral sequence of $X$ degenerates at the second page and the dimensions of $E_1,E_2$, $H_{dR}(X)$ and $H_{BC}(X)$ are given as:

\tiny
\[
\begin{array}{ccccccc}
\begin{array}{ccccccc}
&&&1&&&\\
&&3&&2&&\\
&3&&6&&2&\\
1&&6&&6&&1\\
&2&&6&&3&\\
&&2&&3&&\\
&&&1&&&
\end{array}
&&
\begin{array}{ccccccc}
&&&1&&&\\
&&2&&2&&\\
&2&&4&&2&\\
1&&4&&4&&1\\
&2&&4&&2&\\
&&2&&2&&\\
&&&1&&&
\end{array}
&&
\begin{array}{ccccccc}
&&&1&&&\\
&&3&&3&&\\
&2&&8&&2&\\
1&&6&&6&&1\\
&3&&4&&3&\\
&&2&&2&&\\
&&&1&&&
\end{array}
&&
\begin{array}{c}
1\\
4\\
8\\
10\\
8\\
4\\
1
\end{array}\\
\dim E_1^{p,q}(X) && \dim E_2^{p,q}(X) && \dim H_{BC}(X)^{p,q} && b_k(X)
\end{array}
\]
\normalsize

$X$ is a fibre bundle over a complex $2$-dimensional torus with fibre a complex $1$-dimensional torus. Let $Z$ be any one such torus fibre. The blow-up $\widetilde{X}$ of $X$ along $Z$ then one has the following $E_1$-isomorphisms:
\begin{align*}
\cA_{\widetilde{X}}\overset{\sim}{\longrightarrow}\mathcal{D}^{top}\cA_{\widetilde X}\oplus \cA_{E}/\pi_E^*\cA_Z
\overset{\simeq}{\longleftarrow}\cA_X\oplus\cA_Z[1].
\end{align*}
Since $Z$ is a torus (in particular, K\"ahler), one has $\dim H_{BC}^{p,q}(X)=\dim H_{\delbar}^{p,q}(Z)=1$ for $p,q\in\{0,1\}$ and $E_1(Z)= E_\infty(Z)$ and the new cohomology groups have dimensions:
\tiny
\[
\begin{array}{ccccccc}
\begin{array}{ccccccc}
&&&1&&&\\
&&3&&2&&\\
&3&&7&&2&\\
1&&7&&7&&1\\
&2&&7&&3&\\
&&2&&3&&\\
&&&1&&&
\end{array}
&&
\begin{array}{ccccccc}
&&&1&&&\\
&&2&&2&&\\
&2&&5&&2&\\
1&&5&&5&&1\\
&2&&5&&2&\\
&&2&&2&&\\
&&&1&&&
\end{array}
&&
\begin{array}{ccccccc}
&&&1&&&\\
&&3&&3&&\\
&2&&9&&2&\\
1&&7&&7&&1\\
&3&&5&&3&\\
&&2&&2&&\\
&&&1&&&
\end{array}
&&
\begin{array}{c}
1\\
4\\
9\\
12\\
9\\
4\\
1
\end{array}\\
\dim E_1^{p,q}(\widetilde{X}) && \dim E_2^{p,q}(\widetilde{X}) && \dim H_{BC}(\widetilde{X})^{p,q} && b_k(\widetilde{X})
\end{array}
\]
\normalsize

The reader looking for more involved examples could now compute the dimensions of cohomology vector spaces of a projective bundle over $X$ or of the blow-up of $X\times \Pro^n$ along $X\times\Pro^m$ for $m\leq n-2$ by essentially the same techniques.
\end{ex}

\begin{rem}
	In section $4.$ of \cite{guillen_critere_2002}, the results proven earlier in that article are used to extend the theory of holomorphic de-Rham complex and the Hodge filtration to possibly singular complex analytic spaces, using resolutions of singularities by iterated blow-ups.\\
	The results of the present article may be read as a formal equality of $E_1$-isomorphism classes 
	\[
	[\cA_{\widetilde{X}}\oplus \cA_Z]=[\cA_X\oplus\cA_E].
	\]
	If one allows formal additive inverses of $E_1$-isomorphism classes, one may use this relation to associate with every possibly singular compact (or even compactifiable with fixed equivalence class of compactifications) complex space a ``virtual Dolbeault double complex'', i.e., an element in the Grothendieck group of the monoid consiting of double complexes with finite $E_1$-page and direct sum as addition. Roughly, this may be achieved by replacing $X$ with a diagram consisting of compact complex manifolds using resolution of singularities and the weak factorization theorem. In particular, this allows to define the Hodge, $E_r$-, Bott-Chern and Aeppli numbers for singular spaces. Of course, for the Hodge (and $E_r$-) numbers, this can already be achieved by the results of \cite[sect. 4]{guillen_critere_2002}. Instead of giving more details, we refer to \cite{peters_hodge_2007} where an additive extension of the Hodge numbers to singular spaces is constructed in the spirit sketched here, using a result in \cite{bittner_universal_2004}. 
\end{rem}

\textbf{Acknowledgements:} The contents of this article were mostly elaborated with financial support by the SFB 878 at the WWU M\"unster and profited from many helpful comments and advice from Christopher Deninger and J\"org Sch\"urmann. I gratefully acknowledge this support. In particular, the results in this text were prompted by a question from J\"org Sch\"urmann, who also suggested that the results in \cite{guillen_critere_2002} might be useful. I would also like to thank Daniele Angella for encouraging discussions on the subject, Tatsuo Suwa for pointing out a gap in an early version of the article and Sheng Rao, Song Yang and Xiangdong Yang for their detailed reading of the preprint version. Finally, I thank the anonymous referees for many interesting suggestions and remarks.

\end{document}